\newlength\figureheight 
\newlength\figurewidth 
\newtheorem{theorem}{Theorem}[section]
\newtheorem{lemma}[theorem]{Lemma}
\newtheorem{claim}[theorem]{Claim}
\newtheorem{proposition}[theorem]{Proposition}
\newtheorem{remark}[theorem]{Remark}
\theoremstyle{definition}
\theoremstyle{remark}
\begin{document}
%
\title{Directed Intersection Representations and the Information Content of Digraphs}

\author{Xujun Liu, Roberto Machado and Olgica Milenkovic\\
University of Illinois, Urbana-Champaign\\
Urbana, Illinois 61801}
\maketitle

\begin{abstract} Consider a directed graph (digraph) in which vertices are assigned color sets, and two vertices are connected if and only if they share at least 
one color and the tail vertex has a strictly smaller color set than the head. 
We seek to determine the smallest possible size of the union of the color sets that allows for such a digraph representation. 
To address this problem, we introduce the new notion of a directed intersection representation of a digraph, 
and show that it is well-defined for all directed acyclic graphs (DAGs). We then proceed to introduce the directed intersection number (DIN), the smallest number of colors needed to 
represent a DAG. Our main results are upper bounds on the DIN of DAGs based on what we call the longest terminal path decomposition of the vertex set, and constructive lower bounds.   
\end{abstract}

\section{Introduction}

In the WWW network, a number of pages are devoted to \emph{topic or item disambiguation}; in disambiguation pages, a number of identical names of designators are used to describe different entities which are further clarified and narrowed down in context via links to more specific pages. 
For example, typing the word ``Michael Jordan'' into a search engine such as Google produces a Wikipedia page which lists sportists, actors, scientists and other persons bearing this name. From this web page, one can choose to follow a link to any one of the items sharing the same two keywords, ``Michael'' and ``Jordan''. Most of the specific pages do not link back to the disambiguation page: For example, following the link to ``Michael Jordan (footballer)'' does not allow for returning to the disambiguation page, and may hence be viewed as a directed link. Furthermore, disambiguation pages tend to have little content, usually in the form of lists, while the pages that link to it tend to have significantly more information about one  of the individuals. 

Motivated by such directed networks of webpages, we consider the following problem, illustrated by a small-scale directed graph depicted in Figure~\ref{fig:web}. Assume that the vertices $A,B,C,D$ correspond to four web-pages that contain different collections of topics, files or networks, represented by color-coded rectangles (For example, each color may correspond to a different person bearing the same name). 
Two web-pages are linked to each other if they have at least one topic in common (e.g., the same name or some other shared feature). For a directed graph, in addition to the shared content assumption one needs to provide an explanation for the direction of the links, i.e., which vertex in the arc 
represents the tail and which vertex in the arc represents the head. In the context of the above described web-page linkages, it is reasonable to assume that a webpage links to another terminal webpage if the latter covers more topics, i.e., contains additional information compared to the source page. In Figure~\ref{fig:web}, the link between web-pages $A$ and $B$ is directed from $A$ to $B$, since $B$ lists three topics, while $A$ lists only two. This give rise to two generative constraints for the existence of a directed edge: Shared information content and content size dominance. This is a natural generative assumption, which has been exploited in a similar form in a number of data mining contexts~\cite{TsourakakisWWW2015,dau2017latent}.

Often, one is only presented with the directed graph topology of a directed graphs and asked to determine the latent vertex content leading to the observed topology. A problem of particular interest is to determine the \emph{smallest} topic/information content that explains the observed digraph. This question may be formally described as follows. Let $D=(V,A)$ be a directed graph with vertex set $V$ and arc set $A$, and assume that each vertex $v \in V$ is associated with a nonempty subset $\varphi(v)$ of a finite ground set $\mathcal{C}$, called the \emph{color set}, such that $(u,v) \in A$ if and only if $|\varphi(u) \cap \varphi(v)| \geq 1$ and $|\varphi(u)| < |\varphi(v)|$ (i.e., two vertices share an arc if their color sets intersect and the color set of the tail is strictly smaller than the color set of the head). If such a representation is possible, we refer to it as a \emph{directed intersection representation.} The question of interest is to determine the smallest cardinality of the ground set $\mathcal{C}$ which allows for a directed intersection representation of a digraph $D$ with $|V|=n$ vertices, henceforth termed the directed intersection number of $D$. Clearly, not all digraphs allow for such a representation. For example, a directed triangle $D\left(V, A\right)$ with $V=\{{1,2,3\}}$ and $A=\{(1,2), (2,3), (3,1)\}$ does not admit a directed intersecting representation, as such a representation would require $|\varphi(1)|< |\varphi(2)|< |\varphi(3)|< |\varphi(1)|$, which is impossible. The same is true of every digraph that contains cycles, but as we subsequently show, every directed acyclic graph (DAG) admits a directed intersection representation. We focus on connected DAGs, although our results apply to disconnected graphs with either no or some small modifications. 

\begin{figure}[!t]
\centering
\includegraphics[width=2.1in]{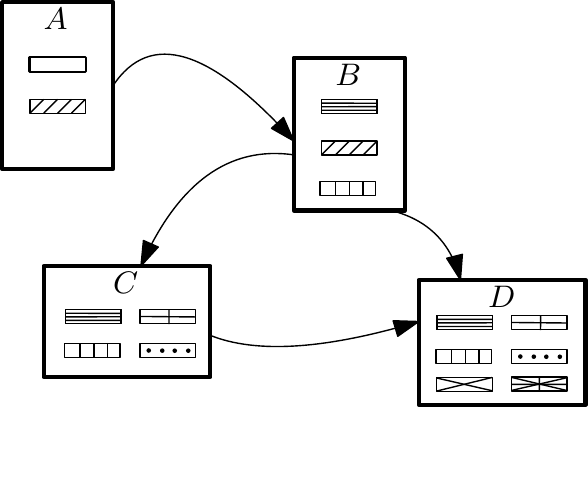}
\vspace{-0.15in}
\caption{An information storage network such as the World Wide Web. Each vertex contains a list of color-coded topics or files, representing its information content (e.g., vertex B contains a green, purple and orange topic). Vertices $A$ and $B$ are connected through an arc ($A$,$B$) since they share the green-colored topic and $A$ lists two, while $B$ lists three files.}
\label{fig:web}
\vspace{-0.2in}
\end{figure}
The problem of finding directed intersection representations of digraphs is closely associated with the intersection representation problem for undirected graphs. Intersection 
representations are of interest in many applications such as keyword conflict resolution, traffic phasing, latent feature discovery and competition graph analysis~\cite{Pullman1983, Roberts1985, dau2017triangle}. Formally, the vertices $v\in V$ of a graph $G(V,E)$ are associated with subsets $\varphi(v)$ of a ground set $\mathcal{C}$ so that $(u,v) \in E$ if and only if $|\varphi(u) \cap \varphi(v)| \geq 1$. The intersection number (IN) of the graph $G=(V,E)$ is the smallest size of the ground set  $\mathcal{C}$ that allows for an intersection representation, and it is well-defined for all graphs. Finding the intersection number of a graph is equivalent to finding the edge clique cover number, as proved by Erd\'os, Goodman and Posa in~\cite{ErdosGoodmanPosa1966}; determining the edge clique cover number is NP-hard, as shown by Orlin~\cite{Orlin1977}. The intersection number of an undirected graph may differ vastly from the DIN of some of its directed counterparts, whenever the latter exists. This is illustrated by two examples in Figure~2.
\begin{figure} \label{fig:in-vs-din}
\centering
\begin{subfigure}{.38\linewidth}
  \centering
  \setlength\figureheight{\linewidth} 
  \setlength\figurewidth{\linewidth}
   \includegraphics[width=\figurewidth,height=\figureheight]{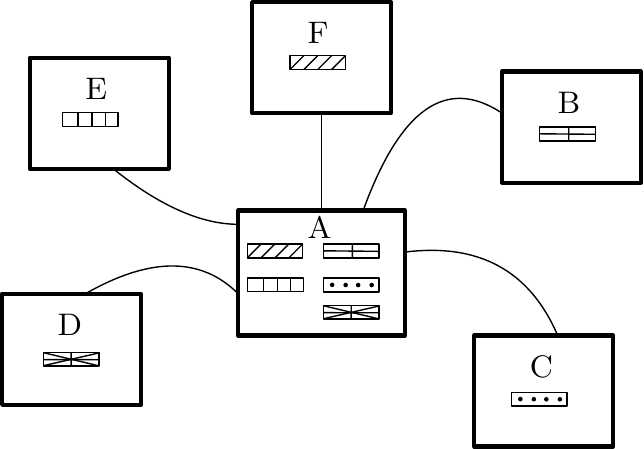}
   \caption{The intersection number of a star graph is equal to $|E|=n-1$ (e.g., $5$). }
\end{subfigure}%
\hspace{1cm}%
\begin{subfigure}{.45\linewidth}
  \centering
  \setlength\figureheight{\linewidth}
  \setlength\figurewidth{\linewidth}
  \includegraphics[width=\figurewidth,height=\figureheight]{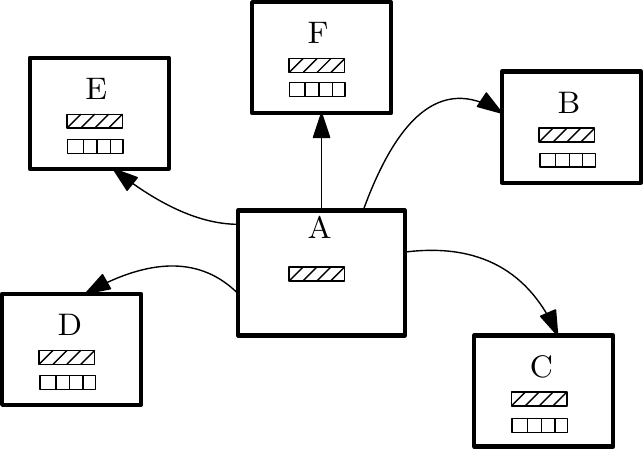}
   \caption{The DIN of any star digraph is $2$.}
\end{subfigure}\\
\vspace{0.5cm}
\begin{subfigure}{4.95cm}
  \centering
  \setlength\figureheight{4.1cm}
  \setlength\figurewidth{4.55cm}
  \includegraphics[width=\figurewidth,height=\figureheight]{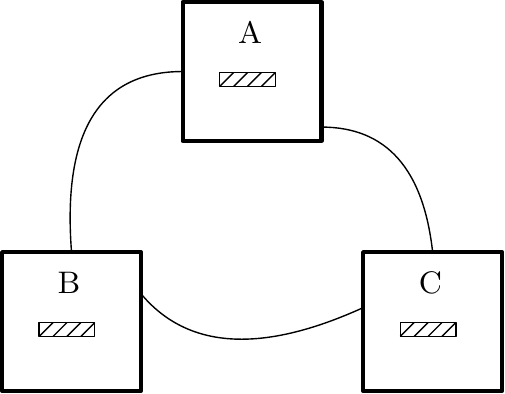}
   \caption{The intersection number of a complete graph is $1$.}
\end{subfigure} \hspace{1cm}
\begin{subfigure}{4.25cm}
  \centering
  \setlength\figureheight{4cm}
  \setlength\figurewidth{4.25cm}
  \includegraphics[width=\figurewidth,height=\figureheight]{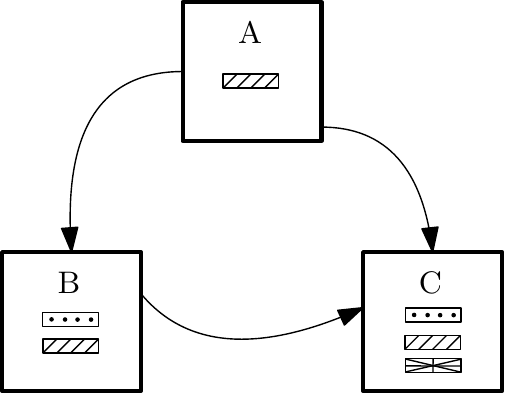}
  \caption{The DIN of a ``complete'' DAG on three vertices is exactly $3$.}
\end{subfigure}
\caption{A comparison of the intersection numbers and DINs of the star and complete graph/DAG.}
\vspace{-0.3in}
\end{figure}

The paper is organized as follows. Section~\ref{sec:families} contains a constructive proof that all DAGs have a finite directed intersection representation and algorithmically identifies representations using a suboptimal number of colors. As a consequence, the constructive algorithm establishes a bound on the DIN of arbitrary DAGs with a prescribed number of vertices. In the same section, we inductively prove an improved upper bound which is $\frac{5\,n^2}{8}-\frac{3\,n}{4}+1$. In Section~\ref{sec:extremal} we introduce the notion of DIN-extremal DAGs and describe constructions of acyclic digraphs with DINs equal to $\frac{n^2}{2} + \lfloor \frac{n^2}{16} - \frac{n}{4} + \frac{1}{4} \rfloor - 1$. 

\section{Representations of Directed Acyclic Graphs} \label{sec:families}

We use the notation and terminology described below. Whenever clear from the context, we omit the argument $n$. 

The in-degree of a vertex $v$ is the number of arcs for which $v$ is the head, while the out-degree is the number of arcs for which $v$ is the tail. 
The set of in-neighbors of $v$ is the set of vertices sharing an arc with $v$ as the head, and is denoted by $N^{-}(v)$. The set of out-neighbors $N^{+}(v)$ is defined similarly.

For a given acyclic digraph $D(V,A)$, let $\Gamma: V \rightarrow \mathbb{N}$ be a mapping that assigns to each vertex $v \in V$ 
the length of the longest directed path that terminates at $v$. The map $\Gamma$ induces a partition of the vertex 
set $V$ into \emph{levels} $(V_0, \ldots, V_{\ell}),$ such that $V_i=\{v\in V: \Gamma(v)=i\}$. We refer to $V_i,\,i=1,\ldots,\ell$ as the 
\emph{longest path decomposition} of $V$ and the graph $G$. Clearly, there is no arc between 
any pair of vertices $u$ and $v$ at the same level $V_i$, $i=1,\ldots,\ell,$ as this would violate the longest path partitioning assumption. Note that although the longest path problem is NP-hard for general graphs, it is linear time for DAGs. Finding the longest path in this case can be accomplished via topological sorting~\cite{di1988algorithms}.

\begin{lemma} \label{lemma:existence}
Every DAG $D(V,A)$ on $n$ vertices admits a directed intersection representation. 
Moreover, $DIN(n) \le \frac{5}{8}n^2-\frac{1}{4}n$. 
\end{lemma}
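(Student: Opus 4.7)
My plan is to construct an explicit representation from the longest-terminal-path decomposition $V = V_0 \cup V_1 \cup \cdots \cup V_\ell$ recalled above. The key structural fact is that no arc lies inside a single level, so I can safely impose the invariant $|\varphi(v)| = f(\Gamma(v))$ for a strictly increasing function $f$: within a level all color sets have the same cardinality, which makes arbitrary color sharing between same-level vertices harmless, while strict inequality across levels automatically encodes the tail/head orientation.

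Concretely, I would use three color families. For each arc $e = (u,v)$ I introduce a dedicated color $c_e$ placed only in $\varphi(u)$ and $\varphi(v)$; for each level $V_i$ a shared block $Q_i$ appearing in every $\varphi(v)$ with $v \in V_i$ and in no other vertex; and for each vertex $v$ a private padding block $P_v$ disjoint from every other occurrence. The sizes $|Q_i|$ and $|P_v|$ are chosen so that $|\varphi(v)| = f(i)$ whenever $v \in V_i$, using the minimal feasible $f(i) = \max\!\bigl(f(i-1)+1,\ D_i\bigr)$ with $D_i := \max_{v \in V_i} d(v)$ and $f(-1) := 0$. Correctness reduces to three quick checks: (i) an arc $(u,v)$ is realized by $c_{(u,v)} \in \varphi(u) \cap \varphi(v)$ together with $f(\Gamma(u)) < f(\Gamma(v))$; (ii) a cross-level non-arc has $\varphi(u) \cap \varphi(v) = \emptyset$ because arc colors are arc-specific, the $Q_i$ blocks are level-disjoint, and paddings are private; (iii) a same-level non-arc satisfies $|\varphi(u)| = |\varphi(v)|$, so the size condition blocks a represented arc irrespective of any intersection through $Q_i$. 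This already proves that every DAG admits a representation.

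For the quantitative bound, the total color count unfolds into an explicit expression in $(n_i, D_i, |A|)$. The main obstacle is that a naive one-color-per-arc accounting is too wasteful---on a transitive tournament it would yield $n(n-1)$ colors instead of the correct $n$---so the sharper construction should also \emph{merge} arc colors whose endpoints jointly form a cross-level clique (a set whose every pair in distinct levels is an arc). The bound $\tfrac{5}{8}n^2 - \tfrac{1}{4}n$ then follows from an accounting argument combining the identities $\sum_i n_i = n$ and $\sum_v d(v) = 2|A|$, the trivial inequality $|A| \leq \binom{n}{2}$, and a case analysis that balances a deep thin-level chain regime (inflating $\sum_i f(i)$) against a wide level regime (inflating the $D_i$-dependent part). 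Working through this optimization over the profile $(n_i, D_i)$ and checking that the extremal configuration does not exceed $\tfrac{5}{8}n^2 - \tfrac{1}{4}n$ is the step I expect to be the main technical obstacle.
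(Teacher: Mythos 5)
Your existence argument is sound: since every arc goes from a lower level of the longest-path decomposition to a strictly higher one, arc-specific colors plus private padding and a level-determined, strictly increasing size function $f$ do yield a valid directed intersection representation of any DAG. The gap is the bound $\frac{5}{8}n^2-\frac{1}{4}n$, which is the real content of the lemma. Counting your construction exactly (take the $Q_i$ empty, since they play no role), the union has size $\sum_{v} f(\Gamma(v)) - |A|$: one color per arc plus padding $f(\Gamma(v))-d(v)$ at each vertex, and $\sum_v d(v)=2|A|$. On a transitive tournament this equals $n(n-1)$, as you yourself observe, which already exceeds $\frac{5}{8}n^2-\frac{1}{4}n$. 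The remedy you propose --- merging arc colors over cross-level cliques and then optimizing over the profile $(n_i,D_i)$ --- is not carried out, and it is not a routine step: merging over cliques is an edge-clique-cover argument, and nothing in the sketch shows that the number of cliques needed, together with the padding forced by the chain constraint $f(i)\ge f(i-1)+1$ and by $f(i)\ge D_i$, stays below $\frac{5}{8}n^2$ in the worst case. So the proposal proves existence but leaves the stated bound unproven.

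The paper sidesteps this optimization entirely with different bookkeeping. Fix a topological (left-to-right) order $v_1,\dots,v_n$ and group consecutive vertices into pairs $(v_{2i-1},v_{2i})$. Each $v_i$ first receives a private palette of $\frac{n}{2}-\lceil i/2\rceil$ fresh colors; an out-arc of $v_i$ is then realized not by a new arc color but by copying one of $v_i$'s own colors into the head(s), using a single color per later pair, so this phase introduces no new colors and never runs out because the number of later pairs equals the palette size. Finally each pair is padded with at most $3i-1$ new colors so that the sizes become $\frac{n}{2}+2i-2$ and $\frac{n}{2}+2i-1$ (equal when the pair is a non-arc), which enforces the size condition globally and handles adjacency or non-adjacency inside each pair. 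The two counts, $\frac{n^2}{4}-\frac{n}{2}$ for the palettes and $\frac{3}{8}n^2+\frac{n}{4}$ for the padding, sum to $\frac{5}{8}n^2-\frac{n}{4}$. To salvage your level-based construction you would need an analogous ``reuse a tail color per target group'' mechanism in place of per-arc colors plus clique merging; otherwise the dependence on $|A|$ cannot be controlled.
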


\begin{proof}
We prove the existence claim and upper bound by describing a constructive color assignment algorithm. 

\textbf{Step 1:} We order the vertices of the digraph as $V = (v_1,v_2, \ldots, v_n)$ so that if $(v_i,v_j) \in A,$ then $i<j$. 
One such possible ordering is henceforth referred to as a left-to-right order, and it clearly well-defined as the digraph is acyclic. 
We then construct the longest path decomposition and order the vertices in the graph starting from the first level and proceeding to the last level. 
The order of vertices inside each level is irrelevant.

\textbf{Step 2:} We group vertices into pairs in order of their labels, i.e., $(v_{2i-1},v_{2i})$, for $1 \le i \le \frac{n}{2}$, and then assign to 
each vertex $v_i,\, i=1,\ldots,n,$ a color set distinct from the color set of all other vertices. 
The sizes of the color sets equal $\frac{n}{2}-\lceil \frac{i}{2} \rceil$. 
\begin{remark}
In this step we used exactly 
\begin{equation}\label{initial}
2 \cdot \left(\frac{n}{2}-1 + \frac{n}{2}-2 + \ldots + 1\right) = 2 \cdot \frac{1+\frac{n}{2}-1}{2} \cdot \left(\frac{n}{2}-1\right) = \frac{n^2}{4} - \frac{n}{2}
\end{equation}
distinct colors. Those colors are going to be reused to accomodate for arcs between pairs.
\end{remark}

\textbf{Step 3:} For each $1 \le i \le n-2$, we assign common colors for arcs from $v_i$ to vertices belonging to pairs that follow the pair in which $v_i$ lies. More precisely:

$\bullet$ If $(v_{i},v_{2j-1}) \notin A$ and $(v_i,v_{2j}) \notin A$ for some $j$ such that $2 \cdot \lceil \frac{i}{2} \rceil < 2j-1 \le n-1 $, then we do nothing and move to the next step.

$\bullet$ If $(v_{i},v_{2j-1}) \in A$ and $(v_i,v_{2j}) \notin A$ for some $j$ such that $2 \cdot \lceil \frac{i}{2} \rceil < 2j-1 \le n-1 $, then we copy one color from $\varphi(v_i)$ not previously used in Step 3 and place it into the color set of $v_{2j-1}$, $\varphi(v_{2j-1})$.

$\bullet$ If $(v_{i},v_{2j-1}) \notin A$ and $(v_i,v_{2j}) \in A$ for some $j$ such that $2 \cdot \lceil \frac{i}{2} \rceil < 2j-1 \le n-1 $, then we copy one color from $\varphi(v_i)$ not previously used in Step 3 and place it into the color set of $v_{2j}$, $\varphi(v_{2j})$.

$\bullet$ If $(v_{i},v_{2j-1}) \in A$ and $(v_i,v_{2j}) \in A$ for some $j$ such that $2 \cdot \lceil \frac{i}{2} \rceil < 2j-1 \le n-1 $, then we copy one color from $\varphi(v_i)$ not previously used in Step 3 and place it into both $\varphi(v_{2j-1})$ and $\varphi(v_{2j})$.
\begin{remark}
Since each vertex $v_i$ has a color set $\varphi(v_i)$ with $\frac{n}{2} - \lceil \frac{i}{2} \rceil$ colors, 
and there are $\frac{n}{2} - \lceil \frac{i}{2} \rceil$ pairs following the pair that vertex $v_i$ is located in the previously fixed left-to-right ordering, 
we will never run out of colors during the above color assignment process.
\end{remark}
The color sets obtained after the previously described procedure are denoted by $\varphi'$.

\textbf{Step 4:} To the color sets of each pair of vertices $(v_{2i-1},v_{2i})$, we add at most $3i$ new colors. The augmented color sets, denoted by $\varphi''$, satisfy 1) if $v_{2i-1}v_{2i}$ is an arc, then $|\varphi''(v_{2i-1})| = \frac{n}{2}+2i-2$ and $|\varphi''(v_{2i})| = \frac{n}{2}+2i-1$; 2) if $v_{2i-1}v_{2i}$ is not an arc, then $|\varphi''(v_{2i-1})| = |\varphi''(v_{2i})| = \frac{n}{2}+2i-1$.
 
In \textbf{Step~4} we add at most 
$$\frac{n}{2}+2i-1 - 1 - \left(\frac{n}{2} - i \right) = 3i-2$$ 
colors to the color set of $v_{2i-1}$ and at 
most $$\frac{n}{2}+2i - 1 - \left(\frac{n}{2} - i\right) = 3i-1$$ 
colors to the color set of $v_{2i}$ to reach the desired color-set sizes. Note that some colors may be reused so that at this step, at most $3i-1$ new colors are 
actually needed for a pair $(v_{2i-1},v_{2i})$. Note that in \textbf{Step~3}, for each pair $(v_{2i-1},v_{2i}),$ we added in total at most $2i-2$ colors to both $\varphi'(v_{2i-1})$ and $\varphi'(v_{2i})$. Since $3i-2 > 2i-2$, we added at least one color in common for the pair 
$(v_{2i-1},v_{2i})$ so that the intersection condition is satisfied when $v_{2i-1} v_{2i}$ is an arc.

Thus, the number of colors used so far is at most 
$$
\left(3 \cdot 1 - 1\right)+\left(3 \cdot 2 - 1\right)+\ldots+\left(3\cdot \frac{n}{2}-1\right)= 3 \cdot \left(1+2+\ldots+\frac{n}{2}\right)-\frac{n}{2} 
$$
\begin{equation}\label{later}
= 3 \cdot \frac{1+\frac{n}{2}}{2} \cdot \frac{n}{2} - \frac{n}{2} = \frac{3}{8}\,n^2 + \frac{n}{4}.
\end{equation}

Next, we claim that $\varphi''$ is a valid representation that uses at most $\frac{5}{8}\, n^2 - \frac{n}{4}$ colors. From~\eqref{initial} and~\eqref{later}, we know that we used at most 
$$\frac{n^2}{4}-\frac{n}{2}+\frac{3}{8}\,n^2 + \frac{n}{4} = \frac{5}{8}\,n^2 - \frac{n}{4}$$ 
colors. 

The size condition obviously holds since $|\varphi''(v_i)| = \frac{n}{2}+i-1$ and $(v_i,v_j \in A$ implies $|\varphi(v_i)|<|\varphi(v_j)|$. The intersection condition also holds since for each $(v_i,v_j)$ with $i<j$, one has

$\bullet$ If $(v_i,v_j) \in A,$ then

1) If $(v_i,v_j)$ is a pair, then $\varphi''(v_i)$ and $\varphi''(v_j)$ have by the previous procedure at least one color in common.

2) If $(v_i,v_j)$ is not a pair, then we added a color for this arc in \textbf{Step 3}.

$\bullet$ If $(v_i,v_j) \notin A,$ then

1) If $(v_i,v_j)$ is a pair, then by previous procedure $|\varphi''(v_i)| = |\varphi''(v_j)|$. 

2) If $(v_i,v_j)$ is not a pair, then $\varphi''(v_i)$ and $\varphi''(v_j)$ have no color in common based on \textbf{Step 2} and \textbf{Step~3}.
\end{proof}

\begin{figure} 
\centering
   \includegraphics[width=5.5in]{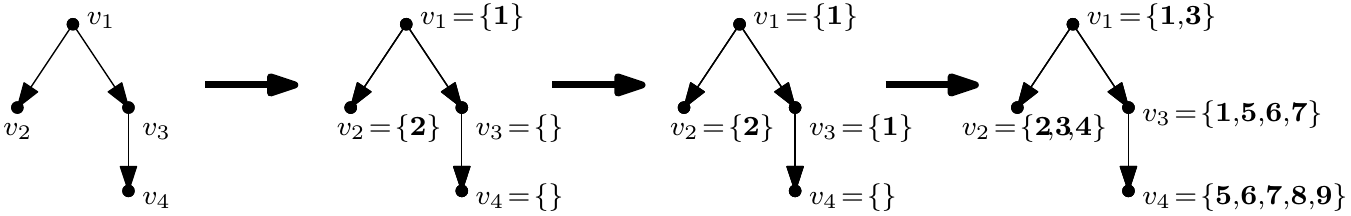}
\end{figure}

\begin{figure} 
  \centering
  \includegraphics[width=5.5in]{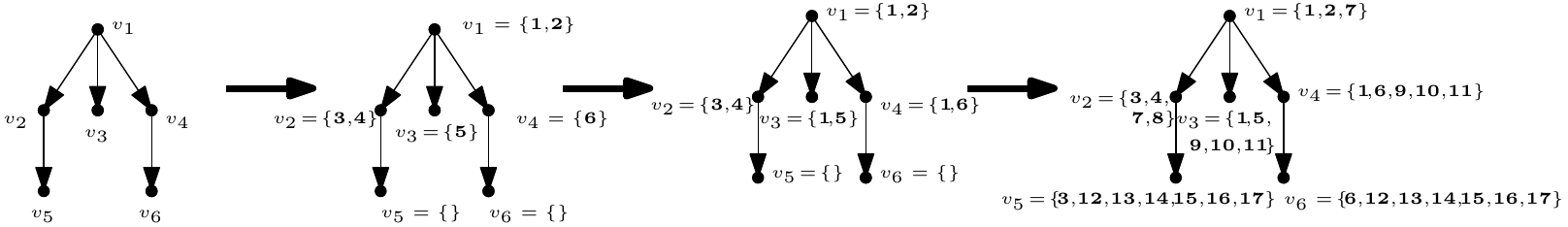}
   \caption{Directed intersection representations for two rooted trees with four and six vertices, respectively. The representations were obtained by using a vertex partition according to the longest terminal path and the constructive algorithm of Lemma~\ref{lemma:existence}.} \label{fig:comparisonalg}
\end{figure}

On the example of the directed rooted tree shown in Figure~\ref{fig:comparisonalg}, we see that more careful book-keeping and repeating of the colors used at the different levels allows one to reduce the cardinality of the representation set $\mathcal{C}$ compared to the one guaranteed by the construction of Lemma~\ref{lemma:existence}. If the vertices of the tree on the top figure are labeled according to 
the preorder traversal of the tree~\cite{treetraversal} as $v_1,v_2,v_3,$ and $v_4$, the longest terminal path vertex partition equals $V_0=\{{v_1\}}, \, V_1=\{{v_2,v_3\}}, \, V_2=\{{v_4\}}$. Using this decomposition and Lemma~\ref{lemma:existence}, we arrive at a bound for the DIN equal to $9$. It is straightforward to see the actual DIN of the tree equals $5$. Similarly, the algorithm of Lemma~\ref{lemma:existence} assigns $17$ distinct colors to the vertices of the tree depicted at the bottom of the figure, while the actual DIN of the tree equals $6$. Nevertheless, as we will see in the next section, a color assignment akin to the one described in Lemma~\ref{lemma:existence} is needed to handle a number of Hamiltonian DAGs.  

The algorithm described in the proof of Lemma~\ref{lemma:existence} established that every DAG has a directed intersection representation and introduced an algorithmic upper bound on the DIN number of any DAG on $n$ vertices with a leading term $\frac{5}{8}\,n^2$.  An improved upper bound may be obtained using (nonconstructive) inductive arguments, as described in our main result, Theorem~\ref{thm:main}, and its proof. For simplicity, we only present the proof for even $n$.

\begin{theorem} \label{thm:main}
Let $D = (V,A)$ be an acyclic digraph on $n$ vertices. If $n$ is even, then $$DIN(D) \le \frac{5n^2}{8}-\frac{3n}{4} + 1.$$
\end{theorem}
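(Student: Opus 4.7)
The plan is to proceed by induction on the even parameter $n$, with $f(n):=\tfrac{5n^{2}}{8}-\tfrac{3n}{4}+1$ denoting the target bound.

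\textbf{Base case.} For $n=2$, the bound $f(2)=2$ is achieved directly: a DAG on two vertices either carries no arc (use $\varphi(v_{1})=\varphi(v_{2})=\{1\}$, one color) or a single arc $(v_{1},v_{2})$ (use $\varphi(v_{1})=\{1\}$ and $\varphi(v_{2})=\{1,2\}$, two colors).

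\textbf{Inductive step.} Assume the bound for every DAG on $n-2$ vertices. Given $D=(V,A)$ on $n$ vertices, fix a topological ordering $v_{1},\dots,v_{n}$ of $V$ and set $u=v_{n-1}$, $w=v_{n}$; then all in-neighbors of $u$ and $w$ lie in $\{v_{1},\dots,v_{n-2}\}$, and the only possible arc between $u$ and $w$ is $u\to w$. Let $D'=D-\{u,w\}$. By the inductive hypothesis, $D'$ admits a directed intersection representation $\varphi'$ on at most $f(n-2)$ colors. I would then extend $\varphi'$ to a directed intersection representation of $D$ by keeping $\varphi(v_{i})=\varphi'(v_{i})$ for $i\le n-2$ and assigning new color sets $\varphi(u),\varphi(w)$ that introduce at most $f(n)-f(n-2)=\tfrac{5n-8}{2}$ additional colors, which combined with $f(n-2)$ yields the desired total.

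Quantitatively, $\tfrac{5n-8}{2}$ is exactly one color less than the per-step cost $\tfrac{5n-6}{2}$ obtained by applying Lemma~\ref{lemma:existence} directly; this saving of one color per inductive step accumulates to the advertised improvement of $\tfrac{n}{2}-1$ over the bound in Lemma~\ref{lemma:existence}. The extension itself would be constructed in two parts: (i) for each arc $(v_{i},u)\in A$ with $i\le n-2$, place in $\varphi(u)$ a single color already present in $\varphi'(v_{i})$, realizing the required intersection without introducing a new color (symmetrically for $w$); (ii) fill the remaining slots of $\varphi(u)$ and $\varphi(w)$ with fresh colors and, when $(u,w)\in A$, reserve one common fresh color for that arc, making sure $|\varphi(u)|$ and $|\varphi(w)|$ strictly exceed $\max_{i\le n-2}|\varphi'(v_{i})|$, with $|\varphi(u)|<|\varphi(w)|$ when $(u,w)\in A$.

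The main obstacle is ensuring that the color chosen in step (i) does not already belong to $\varphi'(v_{j})$ for some $v_{j}\notin N^{-}(u)$; such an accidental reuse would produce a spurious intersection $\varphi(u)\cap\varphi(v_{j})\neq\emptyset$ and, since sizes are strictly ordered, a spurious arc $(v_{j},u)$. To sidestep this, I would strengthen the inductive hypothesis by demanding that every $\varphi'(v_{i})$ contains a designated ``private'' color appearing in no other $\varphi'$-set, or equivalently pre-process $\varphi'$ at negligible cost to inject such private colors. With private colors in hand, the reuse in step (i) becomes free, and a careful count of the fresh colors needed in step (ii)---padding the sizes and, if applicable, one shared color between $u$ and $w$---should match the budget of $\tfrac{5n-8}{2}$, completing the induction.
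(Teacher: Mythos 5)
Your induction runs in the opposite direction from the paper's (you delete the two \emph{last} vertices in a topological order, i.e.\ sinks, while the paper deletes the \emph{first} pair, i.e.\ sources, and gives them entirely fresh small color sets), and this reversal exposes the gap in your argument: the counting in your step (ii) is never actually bounded. Since $u=v_{n-1}$ and $w=v_n$ are heads of all their arcs, you must make $|\varphi(u)|$ and $|\varphi(w)|$ strictly larger than the color-set sizes of their in-neighbors, and essentially all of that padding must consist of fresh colors (any reused old color risks a spurious intersection, which is exactly why you introduce private colors). But your inductive hypothesis only controls the \emph{total} number of colors of $\varphi'$, not the \emph{maximum size} $\max_i|\varphi'(v_i)|$; a priori that maximum can be as large as $f(n-2)\approx \tfrac{5n^2}{8}$, in which case the fresh padding alone vastly exceeds the available increment $f(n)-f(n-2)=\tfrac{5n-8}{2}$. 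So the induction cannot close as stated: you need a further strengthening of the hypothesis that bounds the color-set sizes themselves. This is precisely what the paper's conditions \textbf{(a)} and \textbf{(b)} do --- they pin every set size near $\tfrac n2$, pad each old pair by exactly $3$ colors per step, and use the pairing condition so that one color of $v_1$ (or $v_2$) serves arcs to both members of a pair, which is where the count $\tfrac{5n}{2}-4$ comes from.

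The private-color device also needs more care than ``negligible cost.'' Once the private color of an in-neighbor $v_i$ is copied into $\varphi(u)$ it is no longer private, so the invariant must be restored for the next step; adding a replacement color to only some old vertices changes their sizes non-uniformly and can create spurious arcs between equal-size non-adjacent vertices sharing a color, so the safe repair is a uniform augmentation of all $n-2$ old sets, costing $n-2$ fresh colors per step. Combined with the padding of $u$ and $w$ (whose sizes, even under an optimally maintained max-size invariant, grow like $\tfrac{3n}{2}$), the per-step cost lands essentially exactly at the budget $\tfrac{5n-8}{2}$, with no slack --- so the approach is at best extremely tight and would require a careful simultaneous induction on the total count, the maximum set size, and the privacy invariant, none of which your sketch carries out. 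In short, the missing idea is the explicit size/pairing control in the strengthened induction hypothesis; without it, step (ii) has no bound and the claimed total does not follow.
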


\begin{proof} 
We prove a stronger statement which asserts that for a left-to-right ordering of the vertices $V$ of an arbitrary acyclic digraph $D$, there exists a representation $\varphi$ such that 

\textbf{(a)} $|\varphi(v_{1})|=\frac{n}{2}$, $|\varphi(v_2)| \ge \frac{n}{2}$, and $|\varphi(v_i)| \ge \frac{n}{2}+1$ for $3 \le i \le n$.

\textbf{(b)} For each pair $(v_{2i-1},v_{2i})$, if $(v_{2i-1},v_{2i}) \in A$ then $|\varphi(v_{2i-1})| = |\varphi(v_{2i})| - 1$, and if $(v_{2i-1},v_{2i}) \notin A$ then $|\varphi(v_{2i-1})| = |\varphi(v_{2i})|$ for $1 \le i \le \frac{n}{2}$.

\textbf{(c)} $\cup_{i=1}^n\, \varphi(v_i)$ contains at most $\frac{5n^2}{8} - \frac{3n}{4} + 1$ colors.

The base case $n=2$ is straightforward, as a connected DAG contains only one arc. In this case, we use $\{{1\}}$ for the head and $\{1,2\}$ for the tail, and this representation clearly satisfies (a), (b), and (c).

We hence assume $n \ge 4$ and delete the arc $(v_1,v_2)$ from $D$ to obtain a new digraph $D'$; the ordering $(v_3, \ldots, v_n)$ is still a left-to-right ordering of $D'$. Thus, by the induction hypothesis, $D'$ has a representation $\varphi'$ satisfying 

\textbf{1)} $|\varphi'(v_3)|=  \frac{n}{2}-1$, $|\varphi'(v_4)| \ge \frac{n}{2}-1$, and $|\varphi'(v_i)| \ge \frac{n}{2}$ for $5 \le i \le n$; 

\textbf{2)} For each pair of vertices $(v_{2i-1},v_{2i})$, if $(v_{2i-1},v_{2i}) \in A,$ then $|\varphi(v_{2i-1})| = |\varphi(v_{2i})| - 1$, and if $(v_{2i-1},v_{2i}) \notin A,$ then $|\varphi(v_{2i-1})| = |\varphi(v_{2i})|$ for $2 \le i \le \frac{n}{2},$ and 

\textbf{3)} The representation $\varphi'$ uses at most 
\begin{equation}\label{2.5n-4}
\frac{5(n-2)^2}{8} - \frac{3(n-2)}{4} + 1 = \frac{5n^2}{8} - \frac{3n}{4} + 1 - \left(\frac{5}{2}\,n - 4\right)
\end{equation}
colors. 

We initialize our procedure by letting $\varphi = \varphi'$.

\textbf{Case 1:} $(v_1,v_2) \notin A$.

\textbf{Step 1:} Assign to $v_1$ a set of $\frac{n}{2}-1$ new colors, say $\{\alpha_1, \ldots, \alpha_{\frac{n}{2}-1}\}$. Let $\varphi(v_1) = \{\alpha_1, \ldots, \alpha_{\frac{n}{2}-1}\}$. Assign to $v_2$ a set of $\frac{n}{2}-1$ new colors, say $\{\beta_1, \ldots, \beta_{\frac{n}{2}-1}\}$, all of which are distinct from the colors in $\{\alpha_1, \ldots, \alpha_{\frac{n}{2}-1}\}$. Let $\varphi(v_2) = \{\beta_1, \ldots, \beta_{\frac{n}{2}-1}\}$.

\textbf{Step 2:} Add the same color $\gamma$ to both $\varphi(v_1)$ and $\varphi(v_2)$.

 
\textbf{Step 3:} For arcs including $v_1$, and for each $2 \le i \le \frac{n}{2},$ we perform the following procedure:
 
$\bullet$ If $(v_1,v_{2i-1}) \in A$ and $(v_1,v_{2i}) \in A$, then we copy a color from $\varphi(v_1)$ (say, $\alpha_{i-1}$) to both $\varphi(v_{2i-1})$ and $\varphi(v_{2i})$.

$\bullet$ If $(v_1,v_{2i-1}) \in A$ and $(v_1,v_{2i}) \notin A$, then we copy a color from $\varphi(v_1)$ (say, $\alpha_{i-1}$) to $\varphi(v_{2i-1})$.

$\bullet$ If $(v_1,v_{2i-1}) \notin A$ and $(v_1,v_{2i}) \in A$, then we copy a color from $\varphi(v_1)$ (say, $\alpha_{i-1}$) to $\varphi(v_{2i})$.

$\bullet$ If $(v_1,v_{2i-1}) \notin A$ and $(v_1,v_{2i}) \notin A$, then we do nothing.

\textbf{Step 4:} For arcs including $v_2$, and for each $2 \le i \le \frac{n}{2},$ we perform the following procedure:
 
$\bullet$ If $(v_2,v_{2i-1}) \in A$ and $(v_2,v_{2i}) \in A$, then we copy a color from $\varphi(v_2)$ (say, $\beta_{i-1}$) to both $\varphi(v_{2i-1})$ and $\varphi(v_{2i})$.

$\bullet$ If $(v_2,v_{2i-1}) \in A$ and $(v_2,v_{2i}) \notin A$, then we copy a color from $\varphi(v_2)$ (say, $\beta_{i-1}$) to $\varphi(v_{2i-1})$.

$\bullet$ If $(v_2,v_{2i-1}) \notin A$ and $(v_2,v_{2i}) \in A$, then we copy a color from $\varphi(v_2)$ (say, $\beta_{i-1}$) to $\varphi(v_{2i})$.

$\bullet$ If $(v_2,v_{2i-1}) \notin A$ and $(v_2,v_{2i}) \notin A$, then we do nothing.

Next, assume that the DAG representation $\varphi$ is as constructed above.

\textbf{Step 5:} For each $2 \le i \le \frac{n}{2}$, we add colors to both $\varphi(v_{2i-1})$ and $\varphi(v_{2i})$ so that the new representation $\varphi$ satisfies 
$$|\varphi(v_j)|-|\varphi'(v_j)| = 3.$$
In the process, we reuse colors to minimize the number of newly added colors. Since the procedures in Step 3 and Step 4 increase the color set of each vertex by at most $2$, one may need to add as many as $3$ new colors to a vertex representation (Note that we actually only need the difference to be $2$, but for consistency with respect to Case 2 we set the value to $3$). As an example, assume that we added $j \in \{0,1,2\}$ colors to $\varphi(v_{2i-1})$ and $k \in \{0,1,2\}$ colors to $\varphi(v_{2i})$ in Step 3 and Step 4. Then, we need to add $\max\,\{3-j,3-k\}$ colors to obtain the desired representation, which for $j=0$ or $k=0$ results in $3$ new colors. This is repeated for each pair, with at most $3$ distinct added colors.

\begin{claim}\label{color1}
The representation $\varphi$ includes at most $\frac{5}{2}\,n - 4$ new colors.
\end{claim}

\begin{proof}
We used $$\frac{n}{2}-1+\frac{n}{2}-1+1 = n - 1$$ colors in Step 1 and Step 2. We used at most $3 \cdot (\frac{n}{2}-1)$ in Step 5. Therefore, we used at most $$n - 1 + \frac{3}{2}\, n - 3 = \frac{5}{2}n - 4$$ new colors in total.
\end{proof}

\begin{claim}
The color assignments $\varphi$ constitute a valid representation satisfying conditions \textbf{(a)}, \textbf{(b)}, and \textbf{(c)}.
\end{claim}

\begin{proof}
\textbf{(i):} For a pair of vertices $(u,w)$ such that $u \in V-\{v_1,v_2\}$ and $w \in V-\{v_1,v_2\}$, we consider the following cases 

1) If $(u,w) \in A,$ then since $\varphi'$ constituted a valid representation, we have that a) the intersection condition holds for $\varphi$ because the two vertices still have representations with a color in common, and b) the size condition holds since we added three colors to both the color sets of $u$ and $w$. 

2) If $(u,w) \notin A,$ and if $u$, $w$ belong to different pairs, then since $\varphi'$ is a valid representation and we added distinct colors to different pairs of vertices in Step 5, $\varphi$ is a valid representation. This claim holds since if the vertices $u$ and $w$ have no color in common in $\varphi'$, then they still have no color in common after different colors are added in Step 5. Furthermore, if the representation sets of the vertices had the same size before we added three colors to each color set, the sizes will remain the same. If $u$, $w$ belong to the same pair, their color set sizes were the same in $\varphi'$ and they stay the same after colors are added in Step 5. Hence, $\varphi$ is still valid.

Similarly, for a pair of vertices $(u,w)$ such that $u \in \{v_1,v_2\}$ and $w \in V-\{v_1,v_2\}$, we consider the following cases. 

1) If $(u,w) \in A$, then the intersection condition holds for $\varphi$ because we added a common color to the color sets of $u$ and $w$ in Step 3 or Step 4. Furthermore, the size condition holds since $$|\varphi(w)| = |\varphi'(w)|+3 \ge \frac{n}{2}-1+3 > \frac{n}{2} = |\varphi(u)|.$$ Therefore, $\varphi$ is a valid representation.

2) If $(u,w) \notin A,$ then $\varphi$ is valid since we did not add any common color to the color sets of the two vertices, and the set $\varphi'(u)$ was obtained by augmenting it with distinct colors.

Recall that under Case 1, $(v_1,v_2) \notin A$ and $|\varphi(v_1)|=|\varphi(v_2)|$. Hence, $\varphi$ is a valid representation.

In addition, we have

\textbf{(a):} $|\varphi(v_1)|=|\varphi(v_2)|=\frac{n}{2}$ and $|\varphi(v_i)| \ge \frac{n}{2}-1+3 \ge \frac{n}{2}+1,$ for $3 \le i \le n$.

\textbf{(b):} For each pair $(v_{2i-1},v_{2i})$, if $(v_{2i-1},v_{2i}) \in A,$ then $|\varphi'(v_{2i-1})| = |\varphi'(v_{2i})| - 1$. Thus, 
$$|\varphi(v_{2i-1})| = |\varphi'(v_{2i-1})| + 3 = |\varphi'(v_{2i})| - 1 + 3 = |\varphi(v_{2i})|-1.$$
If $(v_{2i-1},v_{2i}) \notin A$, where $2 \le i \le \frac{n}{2}$, then $|\varphi'(v_{2i-1})| = |\varphi'(v_{2i})|$. Thus, 
$$|\varphi(v_{2i-1})| = |\varphi'(v_{2i-1})|+3 = |\varphi'(v_{2i})|+3 = |\varphi(v_{2i})|.$$ 
These properties also hold for $i=1$, as previously established.

\textbf{(c):} By Claim~\ref{color1}, we used at most $\frac{5}{2}n-4$ new colors.
\end{proof}

\textbf{Case 2:} $(v_1,v_2) \in A$.

\textbf{Step 1:} This step follows along the same lines as Step 1 of Case 1.

\textbf{Step 2:} Add a common color $\gamma$ to both $\varphi(v_1)$ and $\varphi(v_2)$ to satisfy the intersection constraint, 
and add a new color $\delta$ to $\varphi(v_2)$ to satisfy the size constraint. 


\textbf{Step 3:} This step follows along the same lines as Step 3 of Case 1. 

\textbf{Step 4:} This step follows along the same lines as Step 4 of Case 1.

\textbf{Step 5:} This step follows along the same lines as Step 4 of Case 1.

Using the same counting arguments as before, it can be shown that the above steps introduce $\frac{5}{2}\,n-3$ new colors (see the claim below).
\begin{claim}\label{2.5n-3}
We used at most $2.5n-3$ new colors.
\end{claim}

\begin{claim}\label{one more} One can remove (save) one color from the given representation.
\end{claim}
\begin{proof}
\textbf{Case 1:} $(v_2,v_3) \in A$.

\textbf{Case 1.1:} $(v_2,v_4) \in A$. Then $\beta_1 \in \varphi(v_3) \cap \varphi(v_4)$ and we can save one color for the pair 
$(v_3,v_4)$ in Step 5 as only two colors suffice. 

\textbf{Case 1.2:} $(v_2,v_4) \notin A$. 

\textbf{Case 1.2.1:} $(v_1,v_3) \in A$. If $(v_1,v_4) \in A,$ then $\alpha_1 \in \varphi(v_3) \cap \varphi(v_4)$ and we can save one 
color introduced in Step 5. If $(v_1,v_4) \notin A,$ then $\beta_1 \in \varphi(v_3)$ and 
$\alpha_1 \in \varphi(v_3)$. We replace $\beta_1 \in \varphi(v_3)$ by $\delta$ and replace $\beta_1 \in \varphi(v_2)$ by $\alpha_1$ and remove $\beta_1$. This saves one color.

\textbf{Case 1.2.2:} $(v_1,v_4) \in A$. Since $\beta_1 \in \varphi(v_3)$ and $\alpha_1 \in \varphi(v_4)$, we can discard one color used in Step 5. 

\textbf{Case 1.2.3:} $(v_1,v_3) \notin A$ and $(v_1,v_4) \notin A$. Then $\alpha_1$ is unused and we can thus replace $\alpha_1$ in $\varphi(v_1)$ by $\delta$ to save one color.

\textbf{Case 2:} $(v_2,v_3) \notin A$.

\textbf{Case 2.1:} $(v_2,v_4) \in A$. Then $\beta_1 \in \varphi(v_4)$. If $(v_1,v_3) \in A$, then $\alpha_1 \in \varphi(v_3)$ and 
we can save a color in Step 5. Thus, we may assume that $(v_1,v_3) \notin A$. In this case, if $(v_1,v_4) \in A$, then 
$\alpha_1 \in \varphi(v_4)$ and we replace $\alpha_1 \in \varphi(v_4)$ by a color we used in Step 5 for $v_3$ (recall that in Step 5, we added 
three new colors to $\varphi(v_3)$ and only reused one of them in $\varphi(v_4)$; hence, there are two colors remaining). In addition, we 
replace $\alpha_1 \in \varphi(v_1)$ by $\beta_1$ to save one color. Thus, we may assume $(v_1,v_4) \notin A$. Then, $\alpha_1$ is not 
used in the second pair and we may replace $\alpha_1 \in \varphi(v_1)$ by $\delta$ to save one color.

\textbf{Case 2.2:} $(v_2,v_4) \notin A$.

\textbf{Case 2.2.1:} If $(v_1,v_3) \in A$ and $(v_1,v_4) \in A$, then $\alpha_1 \in \varphi(v_3) \cap \varphi(v_4)$ and we 
saved a color in Step 5.

\textbf{Case 2.2.2:} If $(v_1,v_3) \notin A$ and $(v_1,v_4) \notin A$, then we may replace $\beta_1 \in \varphi(v_2)$ by $\alpha_1$ 
to save one color.

\textbf{Case 2.2.3:} If $(v_1,v_3) \in A$ and $(v_1,v_4) \notin A$ or $(v_1,v_3) \notin A$ and $(v_1,v_4) \in A$, then we modify Step 5 by requiring that the color sets be augmented by two rather than three colors. This allows us to save at least one color.
\end{proof}

\begin{claim}
The representation $\varphi$ is valid and it satisfies conditions \textbf{(a)}, \textbf{(b)}, and \textbf{(c)}.
\end{claim}

\begin{proof} We separately consider two cases.

$\bullet$ For Case 2.2.3,

For a pair of vertices $(u,w)$ such that $u \in V-\{v_1,v_2\}$ and $w \in V-\{v_1,v_2\}$, we consider the following cases. 

1) If $(u,w) \in A,$ then since $\varphi'$ constituted a valid representation we have that a) the intersection condition holds for $\varphi$ 
because the two vertices still have a representation with a color in common, and b) the size condition holds since we added two colors to both the color set of $u$ and $w$. 

2) If $(u,w) \notin A,$ and if $u,$ $w$ belong to different pairs, then since $\varphi'$ is a valid representation and we added 
distinct colors to different pairs in Step 5, $\varphi$ is a valid representation. This claim holds since if the vertices $u$ and $w$ have no color in common in $\varphi',$ then they still have no color in common after different colors are added in Step 5. Furthermore, if the color set representations of two vertices had the same size, then since we added two colors to both color sets, the color sets of the vertices will still have the same size. If $u,$ $w$ belong to the same pair, then their color size were the same in $\varphi'$ and remain the same after colors are added in Step 5. 
Hence, $\varphi$ is a valid representation.

Similarly, for a pair of vertices $(u,w)$ such that $u \in \{v_1,v_2\}$ and $w \in V-\{v_1,v_2,v_3,v_4\}$, we consider the following cases. 

1) If $(u,w) \in A,$ then a) the intersection condition holds for $\varphi$ because we added one common color in Step 3 or Step 4,  and b) the size condition holds since 
$$|\varphi(w)| = |\varphi'(w)|+2 \ge \frac{n}{2}+2 > \frac{n}{2}+1 \ge |\varphi(u)|.$$ 
Therefore, $\varphi$ is a valid representation.

2) If $(u,w) \notin A,$ then $\varphi$ is valid since 
$$\varphi(w) \ge \frac{n}{2}+2 > \frac{n}{2}+1 \ge \varphi(u)$$ 
and we did not add a common color for the two vertices, and $\varphi'(u)$ was obtained by adding distinct colors to $\varphi(u)$.

For $(v_1,v_3)$, when $(v_1,v_3) \in A$ we added $\alpha_1$ to $\varphi(v_3)$ so that
 $$|\varphi(v_3)| = \frac{n}{2}+1 > \frac{n}{2} = |\varphi(v_1)|.$$ When $(v_1,v_3) \notin A$ we added distinct colors to $\varphi(v_1)$ and $\varphi(v_3)$. Thus, $\varphi$ is valid.

For $(v_1,v_4)$, when $(v_1,v_4) \in A$ we added $\alpha_1$ to $\varphi(v_4)$ so that
 $$|\varphi(v_4)| = \frac{n}{2}+1 > \frac{n}{2} = |\varphi(v_1)|.$$ When $v_1v_4 \notin A$ we added distinct colors to $\varphi(v_1)$ and $\varphi(v_4)$. Thus, $\varphi$ is valid.

For $(v_2,v_3)$, we added distinct colors to $\varphi(v_2)$ and $\varphi(v_3)$. Thus, $\varphi$ is valid.

For $(v_2,v_4)$, we added distinct colors to $\varphi(v_2)$ and $\varphi(v_4)$. Thus, $\varphi$ is valid.

For $(v_1,v_2)$, since $(v_1,v_2) \in A$, $\gamma \in \varphi(v_1) \cap \varphi(v_2)$, and $|\varphi(v_1)|=|\varphi(v_2)|-1$ we have that $\varphi$ is valid.

To verify that conditions \textbf{(a)}, \textbf{(b)} and \textbf{(c)} are satisfied, observe that:

\textbf{(a):} $|\varphi(v_1)|=|\varphi(v_2)|-1=\frac{n}{2}$ and $|\varphi(v_i)| \ge \frac{n}{2}+1$ for $3 \le i \le n$.

\textbf{(b):} For each pair $(v_{2i-1},v_{2i})$, if $(v_{2i-1},v_{2i}) \in A$ then $|\varphi'(v_{2i-1})| = |\varphi'(v_{2i})| - 1$. Thus, $$|\varphi(v_{2i-1})| = |\varphi'(v_{2i-1})| + 2 = |\varphi'(v_{2i})| - 1 + 2 = |\varphi(v_{2i})|-1.$$ This claim is also true for $i=1$, which we already showed.

If $(v_{2i-1},v_{2i}) \notin A$, where $2 \le i \le \frac{n}{2}$, then $|\varphi'(v_{2i-1})| = |\varphi'(v_{2i})|$. Thus, $$|\varphi(v_{2i-1})| = |\varphi'(v_{2i-1})|+2 = |\varphi'(v_{2i})|+2 = |\varphi(v_{2i})|.$$ 

\textbf{(c):} By Claim~\ref{2.5n-3} and Claim~\ref{one more}, we used at most $2.5n-4$ new colors.

$\bullet$ For the other cases, 

For a pair of vertices $(u,w)$ such that $u \in V-\{v_1,v_2\}$ and $w \in V-\{v_1,v_2\}$, we consider the following cases. 

If $(u,w) \in A$ then since $\varphi'$ was valid 1) the intersection condition still holds for $\varphi$ because they still have color in common and 2) the size condition still hold since we added three colors to each of the color set of $u$ and $w$. 

If $(u,w) \notin A$, and the two vertices are in different pairs then since $\varphi'$ was valid and we added distinct colors to different pairs in Step 5, we have that $\varphi$ is valid because if $u$ and $v$ have no color in common in $\varphi'$ then they still have no color in common after we added different colors in Step 5; if they had the same size in $\varphi'$ then since we added three colors to each color set their sizes remain the same. If the two vertices are in the same pair then their color size was the same in $\varphi'$ and it stays the same after adding colors in Step 5. Hence, $\varphi$ is still valid.

For a pair of vertices $(u,w)$ such that $u \in \{v_1,v_2\}$ and $w \in V-\{v_1,v_2\}$, we consider the following cases. 

If $(u,w) \in A$ then 1) the intersection condition holds for $\varphi$ because we added a common color in Step 3 or Step 4 to the color sets of $u$ and $w$ and 2) the size condition hold since $$|\varphi(w)| = |\varphi'(w)|+3 \ge \frac{n}{2}-1+3 > \frac{n}{2}+1 \ge |\varphi(u)|.$$ Therefore, $\varphi$ is valid.

If $(u,w) \notin A$ then $\varphi$ is valid since we did not add any common color for them and $u$ uses distinct colors from $\varphi'$.

For $(v_1,v_2)$, since $(v_1,v_2) \in A$, $\gamma \in \varphi(v_1) \cap \varphi(v_2)$, and $|\varphi(v_1)|=|\varphi(v_2)|-1$ we have that $\varphi$ is valid.

To verify that conditions \textbf{(a)}, \textbf{(b)} and \textbf{(c)} are satisfied, observe that:

\textbf{(a):} $|\varphi(v_1)|=|\varphi(v_2)|-1=\frac{n}{2}$ and $|\varphi(v_i)| \ge \frac{n}{2}-1+3 \ge \frac{n}{2}+1$ for $3 \le i \le n$.

\textbf{(b):} For each pair $(v_{2i-1},v_{2i})$, if $(v_{2i-1},v_{2i}) \in A$ then $|\varphi'(v_{2i-1})| = |\varphi'(v_{2i})| - 1$. Thus, $$|\varphi(v_{2i-1})| = |\varphi'(v_{2i-1})| + 3 = |\varphi'(v_{2i})| - 1 + 3 = |\varphi(v_{2i})|-1.$$ This claim is also true for $i=1$, which we already showed.

If $(v_{2i-1},v_{2i}) \notin A$, where $2 \le i \le \frac{n}{2}$, then $|\varphi'(v_{2i-1})| = |\varphi'(v_{2i})|$. Thus, $$|\varphi(v_{2i-1})| = |\varphi'(v_{2i-1})|+3 = |\varphi'(v_{2i})|+3 = |\varphi(v_{2i})|.$$ 

\textbf{(c):} By Claim~\ref{2.5n-3} and Claim~\ref{one more}, we used at most $2.5n-4$ new colors.\\
This proves the claim. \end{proof}
This completes the proof of the theorem. \end{proof}

\section{Extremal DIN Digraphs and Lower Bounds} \label{sec:extremal}

The derivations in the previous section proved that for any DAG $D$ on $n$ vertices, one has 
\begin{equation} \label{eq:best}
DIN(D) \le  \frac{5n^2}{8}-\frac{3n}{4} + 1.
\end{equation}

In comparison, the intersection number of any graph on $n$ 
vertices is upper bounded by $\frac{n^2}{4}$~\cite{ErdosGoodmanPosa1966}. Furthermore, the existence of undirected graphs 
that meet the bound $\frac{n^2}{4}$ can be established by observing that the intersection number of a graph is equivalent to its 
edge-clique cover number and by invoking Mantel's theorem~\cite{mantel1907problem} which asserts that any triangle-free graph on $n$ vertices
can have at most $\frac{n^2}{4}$ edges. The extremal graphs with respect to the intersection number are the well-known Turan graphs $T(n,2)$~\cite{turan1954theory}. 

Consequently, the following question is of interest in the context of directed intersection representations: Do there exist DAGs that meet the upper 
bound in~\eqref{eq:best} and which DIN values are actually achievable? To this end, we introduce the notion of \emph{DIN-extremal} DAGs: A DAG on $n$ vertices is said to be DIN-extremal if it has the largest DIN among all DAGs with the same number of vertices. 

Directed path DAGs, e.g., directed acyclic graphs $D(V,A)$ with $V=\{{1,2,\ldots,n\}}$ and $A=\{{(1,2),(2,3),(3,4),\ldots,(n-1,n)\}}$ have DINs that scale as $\frac{n^2}{4}$. The following result formalizes this observation. 
\begin{proposition}
Let $D(V,A)$ be a directed path on $n$ vertices. If $n$ is even, then $DIN(D)=\frac{n^2+2n}{4}$; if $n$ is odd, then $DIN(D)=\frac{n^2+2n+1}{4}$.
\end{proposition}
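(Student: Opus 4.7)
The plan is to prove matching upper and lower bounds separately. First, I would isolate the key structural constraint for a directed path: since $(v_i,v_{i+1})\in A$ forces $|\varphi(v_i)|<|\varphi(v_{i+1})|$, iteration gives $|\varphi(v_1)|<|\varphi(v_2)|<\cdots<|\varphi(v_n)|$, so every pair of color sets differs in size. Consequently, for any indices $i<j$ with $j-i\geq 2$, if $\varphi(v_i)\cap\varphi(v_j)\neq\emptyset$ then the arc $(v_i,v_j)$ would be forced, contradicting the path structure. Hence non-consecutive color sets must be disjoint, while consecutive ones must intersect.

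For the upper bound, I would exhibit an explicit construction in which $\varphi(v_i)$ is an interval of $i$ consecutive integers, arranged so that $|\varphi(v_i)\cap\varphi(v_{i+1})|=\lceil i/2\rceil$ and $\varphi(v_i)\cap\varphi(v_j)=\emptyset$ whenever $|i-j|\geq 2$. Writing $g(i)$ for the largest color used through vertex $v_i$, the recursion $g(i+1)=g(i)+(i+1)-\lceil i/2\rceil = g(i)+\lfloor i/2\rfloor +1$ with $g(1)=1$ sums telescopically to $g(n)=\frac{n^2+2n}{4}$ for even $n$ and $g(n)=\frac{(n+1)^2}{4}$ for odd $n$, giving the desired upper bounds.

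For the matching lower bound, let $s_i=|\varphi(v_i)|$, $a_i=|\varphi(v_i)\cap\varphi(v_{i+1})|$ with $a_0=a_n=0$, and $b_i=s_i-a_{i-1}$, the count of colors appearing in $\varphi(v_i)$ but not in $\varphi(v_{i-1})$. By the disjointness observation above, the total number of colors equals $\sum_{i=1}^n b_i$. The active constraints are $s_i\geq a_{i-1}+a_i$, $s_{i+1}\geq s_i+1$, and $s_i\geq i$. I would then establish the key pair inequality
\[
b_i+b_{i+1}\;=\;(s_i-a_{i-1})+(s_{i+1}-a_i)\;\geq\;2s_i+1-(a_{i-1}+a_i)\;\geq\;s_i+1\;\geq\;i+1.
\]
For even $n$, summing this over the pairs $(b_1,b_2),(b_3,b_4),\ldots,(b_{n-1},b_n)$ yields $\sum b_i\geq 2+4+\cdots+n=\frac{n(n+2)}{4}$. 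For odd $n$, summing over $(b_2,b_3),\ldots,(b_{n-1},b_n)$ and adding the trivial bound $b_1=s_1\geq 1$ gives $\sum b_i\geq 1+(3+5+\cdots+n)=\frac{(n+1)^2}{4}$.

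The main obstacle I anticipate is pinpointing the correct tight pair inequality: a naive application of $s_i\geq a_{i-1}+a_i$ alone gives only $\sum b_i\geq\frac{1}{2}\sum s_i\geq\frac{n(n+1)}{4}$, which loses a linear term. The essential trick is to couple that constraint with $s_{i+1}\geq s_i+1$ inside a single pair, so that the strict growth of the sizes is charged exactly once per pair. Once the pair inequality is in place, the parity bookkeeping for the final summation is routine and matches the explicit construction above.
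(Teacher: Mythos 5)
Your proposal is correct, and since the paper declares this proposition ``straightforward'' and omits its proof, there is nothing to match line-by-line; your argument is a legitimate, self-contained route. The structural facts you isolate are right: strict monotonicity of the sizes along the path forces any two non-consecutive vertices to have disjoint color sets, so each color lies in at most two (consecutive) sets and the total count is indeed $\sum_i b_i$ with $b_i=s_i-a_{i-1}$; the pair inequality $b_i+b_{i+1}\ge s_i+1\ge i+1$ is valid and the parity bookkeeping gives exactly $\frac{n(n+2)}{4}$ (even $n$) and $\frac{(n+1)^2}{4}$ (odd $n$), matching your interval construction, which does exist because $\bigl\lceil \frac{i-1}{2}\bigr\rceil+\bigl\lceil \frac{i}{2}\bigr\rceil=i$, so the two overlap zones exactly tile $\varphi(v_i)$ --- worth one explicit sentence in a final write-up, but not a gap. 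One remark: the lower bound can be obtained more cheaply, in the spirit of the paper's arguments for source arc-paths (Propositions 3.2--3.3): the alternating vertices $v_2,v_4,\ldots$ (for even $n$) or $v_1,v_3,\ldots$ (for odd $n$) span no arcs, hence have pairwise disjoint color sets of sizes $s_i\ge i$, and summing these sizes already yields $2+4+\cdots+n=\frac{n^2+2n}{4}$, respectively $1+3+\cdots+n=\frac{(n+1)^2}{4}$, with no need for the $b_i$/pair-inequality machinery. Your refinement is not wrong --- it recovers the same constant and correctly explains why the naive ``each color used twice'' count loses $n/4$ --- but the independent-set count is the shorter path to the same number.
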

The proof of the result is straightforward and hence omitted.

Figure~4 provides examples of DIN-extremal DAGs for $n\leq 7$ vertices. 
These graphs were obtained by combining computer simulations and proof techniques used in 
establishing the upper bound of~\eqref{eq:best}. Direct verification for large $n$ through 
exhaustive search is prohibitively complex, as the number of connected/disconnected DAGs with $n$ vertices follows a ``fast growing'' recurrence~\cite{robinson1977counting}.
For example, even for $n=6$, there exist $5984$ different unlabeled DAGs. Note that all listed extremal DAGs are Hamiltonian, e.g., they contain a 
directed path visiting each of the $n$ vertices exactly once. As 
such, the digraphs have a unique topological order induced by the directed path, and for the decomposition described on page $5$ one has $|V_i|=1$ for all $i \in [n]$. Note that the bound in~\eqref{eq:best} for $n=2,3,4,5, 6,7$ equals $2,4,8,12,19,26$, respectively. Hence, the upper bound in (4) is loose for $n\geq6$.

\begin{center} \label{fig:examples}
				\begin{figure}[h]
				\hspace{0.8in}
		\includegraphics[width=8cm]{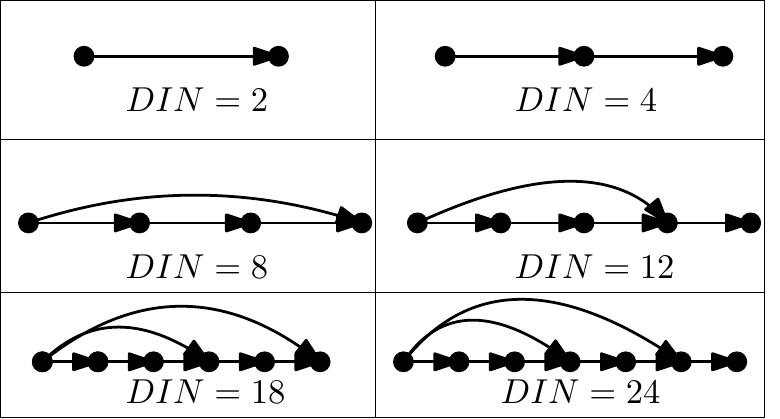}
		\hspace{0.2in}  \caption{Examples of DIN-extremal graphs for $n\leq 7$.}
				\end{figure}
	\end{center}
	
\begin{figure} \label{fig:rainbows}
\centering
\begin{subfigure}{2.3in}
  \centering
   \includegraphics[width=2.3in,height=0.5in]{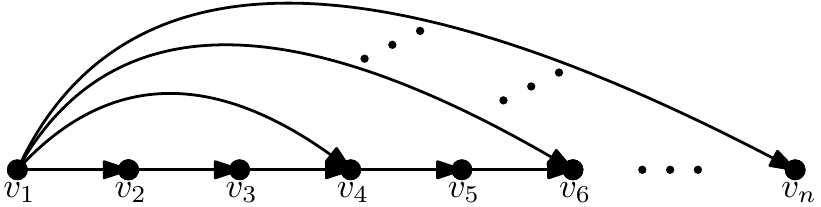}
   \caption{Source arc-path, $n$ even.}
\end{subfigure}%
\hspace{0.9cm}%
\begin{subfigure}{2.3in}
  \centering
   \includegraphics[width=2.3in,height=0.5in]{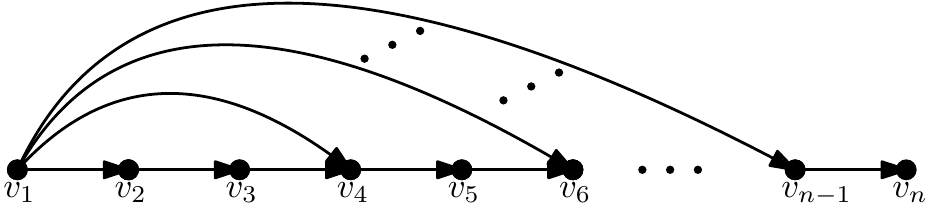}
   \caption{Source arc-path, $n$ odd.}
\end{subfigure}%
\end{figure}
	
For all $n\leq7$ the extremal digraphs are what we refer to as \emph{source arc-paths}, illustrated in Figure~5 a),b). 
A source arc-path on $n$ vertices has the following arc set  
$$A=\{(v_1, v_{2k}): k\in [\lfloor n/2 \rfloor]\}\cup \{(v_{k},v_{k+1}): k \in [n\!-\!1]\}.$$
It is straightforward to prove the following result.
\begin{proposition}
The DIN of a source arc-path on $n$ vertices is equal to $\lfloor \frac{n^2}{2} \rfloor=\lfloor \frac{4n^2}{8} \rfloor$. Hence, the DIN of source arc-paths is by $\frac{n^2}{8}$ smaller than the leading term of the upper bound~\eqref{eq:best}. 
\end{proposition}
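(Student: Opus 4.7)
The plan is to match an explicit construction (upper bound) with an independent-set lower bound. For the upper bound, I would set $|\varphi(v_i)|=\lfloor n/2\rfloor+i-1$, assign $\varphi(v_1)=\{\alpha_1,\ldots,\alpha_{\lfloor n/2\rfloor}\}$ with each $\alpha_k$ also placed in $\varphi(v_{2k})$ to realize the arc $(v_1,v_{2k})$, handle the consecutive path arcs $(v_i,v_{i+1})$ for $i\ge 2$ by sharing suitably chosen numbers of colors between $v_i$ and $v_{i+1}$, and fill any remaining slots with singleton colors. A direct count of the three contributions (the $\lfloor n/2\rfloor$ colors of type $\alpha_k$, the consecutive-pair colors, and the singletons) then yields exactly $\lfloor n^2/2\rfloor$ distinct colors; monotonicity of the color-set sizes is automatic since we start at $|\varphi(v_1)|=\lfloor n/2\rfloor$, which is the minimum value forced by $v_1$ having $\lfloor n/2\rfloor$ pairwise non-adjacent out-neighbors.

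For the lower bound, the key structural observation is that every color is supported on at most two vertices. If vertices $v_{i_1},v_{i_2},v_{i_3}$ with $i_1<i_2<i_3$ shared a color, strictly increasing sizes would force each $(v_{i_a},v_{i_b})$ to be an arc; but the only arcs $(v_i,v_j)$ with $i<j$ in the source arc-path are the consecutive arcs $(v_i,v_{i+1})$ and the arcs $(v_1,v_{2k})$, and a short case split (either $i_1=1$ with $i_2,i_3$ both even, or $i_1\ge 2$ which forces $i_2=i_1+1$ and $i_3=i_2+1$, making $(v_{i_1},v_{i_3})$ non-adjacent) excludes any such triple. Next, I would introduce the parity-dependent vertex set
\begin{equation*}
I=\begin{cases}\{v_2,v_4,\ldots,v_n\} & n\text{ even,}\\ \{v_1,v_3,v_5,\ldots,v_n\} & n\text{ odd,}\end{cases}
\end{equation*}
and observe that no two vertices of $I$ are joined by an arc (in both parities, pairs inside $I$ are non-consecutive and, when $n$ is odd, the $v_1$-to-odd arcs are absent). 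Combined with the at-most-two-vertices property, no color lies in two vertices of $I$, hence
\begin{equation*}
DIN(D)\ \ge\ \sum_{v\in I}|\varphi(v)|.
\end{equation*}
Substituting the sharp bound $|\varphi(v_i)|\ge\lfloor n/2\rfloor+i-1$ and summing over $I$ gives $\lfloor n^2/2\rfloor$ in either parity, matching the construction.

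The main obstacle is the structural lemma that each color is supported on at most two vertices, together with picking the correct parity-dependent independent set $I$; once these are in place, everything else reduces to routine arithmetic.
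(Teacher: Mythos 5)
Your proposal is correct and follows essentially the same route as the paper: the lower bound from the forced sizes $|\varphi(v_i)|\ge\lfloor n/2\rfloor+i-1$ (via the $\lfloor n/2\rfloor$ pairwise non-adjacent out-neighbors of $v_1$ and strict growth along the Hamiltonian path) combined with pairwise disjointness of the color sets on an arcless alternating vertex set, and the upper bound from a construction that shares colors only between $v_1$ and the even-indexed vertices and between consecutive vertices. The one detail you leave implicit---the ``suitably chosen numbers'' of shared colors, which must be large enough that the vertices outside your set $I$ carry no private colors, or the total would exceed $\lfloor n^2/2\rfloor$---is precisely what the paper's explicit color lists provide, and your explicit handling of odd $n$ goes slightly beyond the paper's even-$n$ write-up.
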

\begin{proof}
A \emph{directed triangle} in a digraph $D = (V, A)$ is a collection of three vertices $\{{v_i,v_j,v_k\}}$ such that $(v_i,v_j) \in A$, $(v_j,v_k) \in A$, and $(v_i,v_k) \in A$. Since a source arc-path avoids directed triangles and every vertex has a color set of different size than another (due to the presence of the directed Hamiltonian path), every color may be used at most twice. We need $\frac{n}{2}$ colors for $\varphi(v_1)$ to represent the arcs $v_1v_{2i}$, where $1 \le i \le \frac{n}{2}$. Since the size of the color sets $\varphi$ increases along the directed path, vertex $v_j$ in the natural ordering has $\varphi(v_j) \geq \frac{n}{2}+j-1$. Furthermore, $(v_{2i},v_{2j}) \notin A$ for a source arc-path, for all $1 \le i < j \le \frac{n}{2}$. Thus, $\varphi(v_{2i}) \cap \varphi(v_{2j})=\emptyset$, $1 \le i < j \le \frac{n}{2}$. This implies the number of colors needed is $$ \ge \frac{n}{2}+1 + \frac{n}{2}+3 + \cdots + \frac{n}{2}+ n-1 = \frac{n}{2} \cdot \frac{n}{2} + \frac{(1+n-1)(\frac{n}{2})}{2} = \frac{n^2}{2}.$$ 

To show that the above lower bound is met, we exhibit the following representation $\varphi$ with $\frac{n}{2}$ colors:\\ 
\vspace{0.08in}
1) $\varphi(v_1)=\{c_1, \ldots, c_{\frac{n}{2}}\}$, $\varphi(v_2)= \{c_1, f_{1}, g_{1,1}, \ldots, g_{\frac{n}{2}-1,1}\}.$\\
2) For $2 \le i \le \frac{n}{2}-1$, $$\varphi(v_{2i})= \{c_i, d_{i}, f_{i}, g_{1,i}, \ldots, g_{\frac{n}{2}+2i-4,i}\},$$ 
$$\varphi(v_n)=\{c_{\frac{n}{2}}, d_{\frac{n}{2}}, g_{1,\frac{n}{2}}, \ldots, g_{\frac{n}{2}+n-3, \frac{n}{2}}\}.$$
3) For $2 \le i \le \frac{n}{2}-1$, $$\varphi(v_{2i-1}) = \{d_i, f_{i-1}, g_{1,i}, \ldots, g_{\frac{n}{2}+2i-4,i}\}.$$
$$\varphi(v_{n-1}) = \{f_{\frac{n}{2}-1}, d_{\frac{n}{2}}, g_{1, \frac{n}{2}}, \ldots, g_{\frac{n}{2}+n-4, \frac{n}{2}}\}.$$
\end{proof}

For $n\geq 8$, there exist DAGs with DINs that exceed those of source arc-paths which are obtained by adding carefully selected additional arcs. 
For even integers $n$, the DIN of such graphs equals $$\frac{n^2}{2} + \lfloor \frac{n^2}{16} - \frac{n}{4} + \frac{1}{4} \rfloor - 1.$$ 
A digraph with the above DIN has a vertex set $V = \{v_1, \ldots, v_n\}$ and arcs constructed as follows:

\textbf{Step 1:} Initialize the arc set as $A = \emptyset$.

\textbf{Step 2:} Add to $A$ arcs of a source-arc-path, i.e., 
$$A = A \cup \{(v_1,v_{2i}): i \in [\frac{n}{2}]\} \cup \{(v_j, v_{j+1}): j \in [n-1]\}.$$

\textbf{Step 3:} Add arcs with tails and heads in the set $\{v_3, v_5, \ldots, v_{n-1}\}$ according to the following rules:

\textbf{Step 3.1:} If $\frac{n-2}{2}$ is even, then let $X = \{v_3, v_5, \ldots, v_{\frac{n}{2}}\}$ and $Y = \{v_{\frac{n}{2}+2}, \ldots, v_{n-1}\}$. Add all arcs 
between $X$ and $Y$ except for $(v_{\frac{n}{2}},v_{\frac{n}{2}+2})$. 

\textbf{Step 3.2:} If $\frac{n-2}{2}$ is odd, then let $X = \{v_3, v_5, \ldots, v_{\frac{n}{2}+1}\}$ and $Y = \{v_{\frac{n}{2}+3}, \ldots, v_{n-1}\}$. Add all arcs 
between $X$ and $Y$ except for $(v_{\frac{n}{2}+1},v_{\frac{n}{2}+3})$.

The above described digraphs have no directed triangles and their number of arcs equals 
$$\lfloor \frac{(\frac{n}{2}-1)^2}{4} \rfloor - 1 = \lfloor \frac{n^2}{16} - \frac{n}{4} + \frac{1}{4} \rfloor - 1.$$

We start with the following lower bound on the DIN number of the augmented source-arc-path graphs.

\begin{proposition}
The DIN of the above family of graphs is at least 
$$\frac{n^2}{2} + \lfloor \frac{n^2}{16} - \frac{n}{4} + \frac{1}{4}  \rfloor - 1.$$
\end{proposition}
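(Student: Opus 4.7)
The plan is to exploit two structural features of the augmented graph: it is triangle-free, and it contains a Hamiltonian directed path $v_1\to v_2\to\cdots\to v_n$. The Hamiltonian path forces strictly increasing color-set sizes $|\varphi(v_1)|<\cdots<|\varphi(v_n)|$; combined with triangle-freeness, this caps the number of vertices sharing any single color at two, because three users would have three distinct sizes and pairwise form a directed triangle. Under the iff-condition, two vertices with distinct sizes then share a color only if they are adjacent.

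First I would verify triangle-freeness. The source arc-path is easily seen to be triangle-free. Any triangle created by an extra arc $(v_a,v_b)$ with $v_a\in X$, $v_b\in Y$ must take the form $v_a\to v_{a+1}\to v_{a+2}$ closed by an extra arc $v_a\to v_{a+2}$, forcing $b=a+2$: the even vertex $v_{a+1}$ has only the single out-arc $v_{a+1}\to v_{a+2}$, and every other length-two path from $v_a$ into the odd ``layer'' fails because $v_a$'s $Y$-out-neighbors only have even out-neighbors. This critical arc is precisely the one that Step~3.1 / Step~3.2 deliberately omits.

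The lower bound then arises from two disjoint sources of colors. The even-indexed vertices $v_2,v_4,\ldots,v_n$ are pairwise non-adjacent and have distinct sizes, hence share no color; their color sets form a disjoint union. Since $v_1$ has $n/2$ out-arcs (to the evens) and two such arcs cannot be served by the same color without a third vertex holding it, we obtain $|\varphi(v_1)|\geq n/2$; monotonicity along the Hamiltonian path then yields $|\varphi(v_i)|\geq n/2+i-1$ and therefore
$$\Bigl|\bigcup_{i=1}^{n/2}\varphi(v_{2i})\Bigr|=\sum_{i=1}^{n/2}|\varphi(v_{2i})|\geq\sum_{i=1}^{n/2}\bigl(n/2+2i-1\bigr)=\frac{n^2}{2}.$$
Denote this union by $A$. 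In addition, each of the $E=\lfloor n^2/16-n/4+1/4\rfloor-1$ extra arcs joins two odd non-$v_1$ vertices, and its required shared color appears only in those two odd vertices (not in $v_1$, which has no arcs to other odd vertices, and not in any even vertex, since a color is used at most twice); hence this color lies outside $A$. The $E$ such colors are pairwise distinct because a twice-used color uniquely determines its pair of endpoints. Adding the two contributions yields $\mathrm{DIN}\geq n^2/2+E$.

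The main obstacle is the triangle-freeness verification, where one must rule out every potential triangle pattern and confirm that the single omitted arc in each case of Step~3 is exactly the one that would otherwise close a triangle. Once triangle-freeness is in hand, the bound follows from a clean double count that partitions colors into those appearing in some even-indexed vertex and those living purely among odd-indexed vertices.
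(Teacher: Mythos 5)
Your argument is correct and follows essentially the same route as the paper's proof: the even-indexed vertices are pairwise non-adjacent with distinct color-set sizes, so their sets are disjoint and contribute $\frac{n^2}{2}$ colors, and triangle-freeness together with the strictly increasing sizes along the Hamiltonian path forces each color to be used at most twice, so each Step-3 arc requires one further color disjoint from those. The only difference is that you also sketch the verification of triangle-freeness (and spell out the ``each color used at most twice'' bookkeeping), which the paper simply asserts; that is a welcome addition rather than a different approach.
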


\begin{proof}
Due to the presence of the arc of a source-arc-path, $v_1$ requires at least $\frac{n}{2}$ colors. Furthermore, since the graph is Hamiltonian, the size of the color sets increases along the path. Based on the previous two observations, one can see that $v_i$ requires at least $\frac{n}{2}+i-1$ colors for all $i \in [n]$.

Since there are no arcs in the digraph induced by the vertex set $\{v_2, v_4, \ldots, v_n\}$ with even labels, the color sets of these vertices have to be mutually disjoint. Thus, the number of colors needed to color vertices with even indices is at least 

$$\frac{n}{2}+1+\frac{n}{2}+3+ \ldots + \frac{n}{2}+n-1 = \frac{n^2}{2}.$$

Since the digraphs avoid directed triangles and every pair of vertices has a different color set sizes, we require one additional color to represent each of the arcs added in Step 3. Due to the absence of directed triangle, we need at least $\lfloor \frac{n^2}{16} - \frac{n}{4} + \frac{1}{4}  \rfloor - 1$ colors. Furthermore, the color sets used for the two previously described vertex sets are disjoint. Thus, the number of colors required is at least 
$$\frac{n^2}{2} + \lfloor \frac{n^2}{16} - \frac{n}{4} + \frac{1}{4}  \rfloor - 1.$$
\end{proof}

To show that the above number of colors suffices to represent the digraphs under consideration, we provide next a representation $\varphi$ using $\frac{n^2}{2} + \lfloor \frac{n^2}{16} - \frac{n}{4} + \frac{1}{4}  \rfloor - 1$ colors.

We start by exhibiting a representation $\varphi'$ of the source-arc-path that uses $\frac{n^2}{2}$ colors and then change the color assignments accordingly:

1) Set $\varphi(v_1)=\{c_1, \ldots, c_{\frac{n}{2}}\}$ and $\varphi(v_2)= \{c_1, f_{1}, g_{1,1}, \ldots, g_{\frac{n}{2}-1,1}\}.$

2) For $2 \le i \le \frac{n}{2}-1$, set 
$$\varphi(v_{2i})= \{c_i, d_{i}, f_{i}, g_{1,i}, \ldots, g_{\frac{n}{2}+2i-4,i}\},$$ 
and
$$\varphi(v_n)=\{c_{\frac{n}{2}}, d_{\frac{n}{2}}, g_{1,\frac{n}{2}}, \ldots, g_{\frac{n}{2}+n-3, \frac{n}{2}}\}.$$

3) For $2 \le i \le \frac{n}{2}-1$, set 
$$\varphi(v_{2i-1}) = \{d_i, f_{i-1}, g_{1,i}, \ldots, g_{\frac{n}{2}+2i-4,i}\},$$
and
$$\varphi(v_{n-1}) = \{f_{\frac{n}{2}-1}, d_{\frac{n}{2}}, g_{1, \frac{n}{2}}, \ldots, g_{\frac{n}{2}+n-4, \frac{n}{2}}\}.$$

Let $m := \lfloor \frac{n^2}{16} - \frac{n}{4} + \frac{1}{4}  \rfloor - 1$.

1') Set $\Gamma_{2i-1} = \{g_{1,i}, \ldots, g_{\frac{n}{2}+2i-4,i}\}$.

2') Order the $m$ arcs in the graph induced by $\{v_3, v_5, \ldots, v_{n-1}\}$ in an arbitrary fashion, say $\{e_1, \ldots, e_m\}$. Set a counter variable to $k = 1$.

3') For $e_k = (v_{2i-1},v_{2j-1}),$ assign a previously unused color $h_k$ to both $\varphi(v_{2i-1})$ and $\varphi(v_{2j-1})$. 
Pick one color $g'$ from $\Gamma_{2i-1}$ and a color $g''$ from $\Gamma_{2j-1}$ not previously used in the procedure. Set 
$$\varphi(v_{2i-1}) = \varphi(v_{2i-1}) \cup h_k - g', \hspace{0.89cm} \text{and} \hspace{1cm}  \Gamma_{2i-1} = \Gamma_{2i-1} - g', $$
 $$\varphi(v_{2j-1}) = \varphi(v_{2j-1}) \cup h_k - g'',  \hspace{8mm} \text{and} \hspace{1cm} \Gamma_{2j-1} = \Gamma_{2j-1} - g''.$$
 Let $k = k+1$. If $k \le m$, go to Step 3'), otherwise stop.
 
 4') Since each $v_{2i-1}$ has degree at most $\frac{n}{4}$ on the digraph induced by $\{v_3, \ldots, v_{n-1}\}$ and at step $k=1$ we had $|\Gamma_{2i-1}| = \frac{n}{2}+2i-4$, we do not run out of colors to replace. This follows since when we choose $g'$ from $\Gamma_{2i-1}$ we always have $\geq \frac{n}{2}+2i-4-\frac{n}{4}$ colors available.
 
 5') Since $g',g''$ were used twice in $\varphi'$ and deleted only once in the processing steps (and thus remain in the union of the colors), each iteration of the 
 procedure in 3) introduces exactly one new color (e.g., $h_k$) to $\varphi$. Therefore, the number of colors used is 
 
 $$\frac{n^2}{2}+m = \frac{n^2}{2}+\lfloor \frac{n^2}{16} - \frac{n}{4} + \frac{1}{4}  \rfloor - 1.$$

This completes the construction of digraphs on $n$ vertices with DIN values $\frac{n^2}{2}+\lfloor \frac{n^2}{16} - \frac{n}{4} + \frac{1}{4}  \rfloor - 1.$

\section{Open Problems}

We conclude the paper by listing a number of open problems and extensions of the line work introduced in the paper.
\begin{itemize}
\item Improve the upper bound in (4) and the constructive lower bound in Proposition 3.3.
\item Prove that for each $n$, there exists a DIN-extremal digraph that is Hamiltonian. 
\item Extended the notion of directed intersection representation to include $p$-intersections, $p>1$, for which the generative size constraint equals $|\varphi(u)\cap\varphi(v)|>p$. It is straightforward to see that $DIN_p(D) \leq DIN(D)+p-1$, where $DIN_p(D)$ directs the directed $p$-intersection number. This observation follows from the observation that adding $p-1$ common colors to the vertices suffices to satisfy the required constraints. Sharper bounds are currently unknown.
\end{itemize}

\section*{Acknowledgment}
The authors gratefully acknowledge many useful discussions with Prof. Alexandr Kostochka from the University of Illinois and are indebted to him for suggesting new proof techniques. The work was supported by the NSF STC Center for Science of Information, 4101-38050, the S\~{a}o Paulo Research Foundation grant 2015/11286-8, the grant NSF CCF 15-26875, and UIUC Research Board Grant RB17164.


%
\bibliographystyle{IEEEtran}
\bibliography{intersection}

\end{document}